\def\rr{{\mathbb R}}
\def\zz{{\mathbb Z}}
\def\nn{{\mathbb N}}
\def\cc{{\mathbb C}}
\def\cx{{\mathcal X}}
\def\cf{{\mathcal F}}
\def\cm{{\mathcal M}}
\def\fz{\infty}
\def\tz{\Theta}
\def\ls{\lesssim}
\def\rr{{\mathbb R}}
\def\cc{{\mathbb C}}
\def\zz{{\mathbb Z}}
\def\nn{{\mathbb N}}
\def\cm{{\mathcal M}}
\def\fz{\infty}
\def\tz{\theta}
\def\ls{\lesssim}
\def\laz{\langle}
\def\raz{\rangle}
\def\r{\right}
\def\lf{\left}
\def\ra{\rangle}
\newtheorem{theorem}{Theorem}[section]
\newtheorem{corollary}[theorem]{Corollary}
\newtheorem{proposition}[theorem]{Proposition}
\theoremstyle{definition}
\newtheorem{remark}[theorem]{Remark}
\newtheorem{definition}[theorem]{Definition}
\numberwithin{equation}{section}
\begin{document}

\title{\bf\Large A note on complex interpolation and Calder\'on
product of quasi-Banach spaces
\footnotetext{\hspace{-0.35cm} 2010 {\it
Mathematics Subject Classification}. Primary 46B70.
\endgraf
{\it Key words and phrases}.
complex interpolation, Calder\'on product, quasi-Banach lattice
\endgraf
Wen Yuan is supported by the National
Natural Science Foundation  of China (Grant No. 11101038), the Specialized Research Fund for the Doctoral Program of Higher Education
of China (Grant No. 20120003110003), the Fundamental Research Funds for
Central Universities of China (Grant No. 2012LYB26) and the
Alexander von Humboldt Foundation.
}}
\date{}
\author{Wen Yuan}
\maketitle

\begin{center}
\begin{minipage}{13.5cm}{\small
{\noindent{\bf Abstract}\quad
In this paper, we prove that the inner complex interpolation of two
quasi-Banach lattices coincides with the closure of their intersection
in their Calder\'on product. This generalizes a classical result by Shestakov in 1974
for Banach lattices.
}}
\end{minipage}
\end{center}

\arraycolsep=1pt


\section{Introduction}

In this paper we consider the relation between complex interpolations and
Calder\'on products for quasi-Banach lattices. We begin with the definition of complex interpolation for quasi-Banach spaces (see, for example, \cite{ca64,km98,kmm}).
Consider a \emph{couple of quasi-Banach spaces} $X_0,X_1$,
which are continuously embedding into a large topological
vector space $Y$. The \emph{space} $X_0+X_1$ is defined as
$$X_0+X_1:=\{h\in Y:\ \exists\ h_i\in X_i,\ i\in\{0,1\},\ {\rm such\ that}\ h=h_0+h_1\},$$
with
$$\|h\|_{X_0+X_1}:=\inf\{\|h_0\|_{X_0}+\|h_1\|_{X_1}:\ h=h_0+h_1,
\ h_0\in X_1\ {\rm and}\ h_1\in X_1\}.$$

Let $U:= \{z \in \cc : \: 0<\Re e\, z<1\}$ and
$\overline{U}:=\{z\in\cc :\: 0\le \Re e\, z\le 1\}.$
A map
$f$: $U\to X$ is said to be \emph{analytic} if, for any given $z_0\in U$, there exists
$\eta\in(0,\fz)$ such that
$f(z)=\sum_{j=0}^\fz h_n(z-z_0)^n,\ h_n\in X$, is uniformly convergent for
$|z-z_0|<\eta$. A quasi-Banach space $X$ is said to be \emph{analytically convex} if
there exists a positive constant $C$ such that, for any analytic function
$f:\ U\to X$ which is continuous on the closed strip $\overline{U}$,
$$\max_{z\in U}\|f(z)\|_X \le C\max_{{\Re e}\,z\in\{0,1\}}\|f(z)\|_X.$$

Suppose that $X_0+X_1$ is analytically convex.
The \emph{set} $\cf:=\cf(X_0,X_1)$ is defined to be the set
of all functions $f$:\ $U\to X_0+X_1$ satisfying that
\begin{enumerate}
\item[(i)] $f$ is analytic and \emph{bounded} in $X_0+X_1$, which means
that $f(U):=\{f(z):\ z\in U\}$
is a bounded set of $X_0+X_1$;
\item[(ii)]
$f$ is extended continuously to the closure $\overline{U}$ of the strip $U$
such that the traces
$t\mapsto f(j+it)$ are bounded continuous functions into $X_j$,
$j\in\{0,1\}$, $t\in\rr$.
\end{enumerate}
We endow $\cf$ with the \emph{quasi-norm}
$$\|f\|_\cf:=\max\lf\{\sup_{t\in\rr}\|f(it)\|_{X_0},
\ \ \sup_{t\in\rr}\|f(1+it)\|_{X_1}\r\}.$$
Let $\cf_0:=\cf_0(X_0,X_1)$ be closure of all functions $f\in \cf$ such that
$f(z)\in X_0\cap X_1$ for all $z\in U$.
We now recall the definition of complex interpolations.

 \begin{definition}
Let $X_0,\,X_1$ be two quasi-Banach spaces such that
$X_0+X_1$ is analytically convex.
Then the \emph{outer complex interpolation space}
$[X_0,X_1]_\tz$ with $\tz\in(0,1)$ is defined by
$$[X_0,X_1]_\tz:=\{g\in X_0+X_1:\ \exists\ f\in\cf\ {\rm such\ that}\ f(\tz)=g\}$$
and its \emph{norm} given by
$\|g\|_{[X_0,X_1]_\tz}:=\inf_{f\in\cf}\{\|f\|_\cf:f(\tz)=g\}.$
The \emph{inner complex interpolation space}
$[X_0,X_1]^i_\tz$ with $\tz\in(0,1)$ is defined via the same as $[X_0,X_1]_\tz$ with
$\cf$ replaced by $\cf_0$.
\end{definition}

It easily follows from the definition that $[X_0,X_1]^i_\tz
\hookrightarrow[X_0,X_1]_\tz$ and
$X_0\cap X_1$ is dense in $[X_0,X_1]^i_\tz$.
If $X_0$ and $X_1$ are both Banach spaces, then it is known that
the inner and outer complex methods coincide (see \cite{ca64,kmm}).
For the general quasi-Banach cases, Kalton, Mayboroda and Mitrea \cite{kmm} pointed out that the inner and the outer complex methods yield the same space if $X_0$
and $X_1$ are separable analytically convex quasi-Banach spaces.
However,  for quasi-Banach spaces without the separability condition,
whether these  two methods
still coincide  is still unclear (see \cite{kmm}).

Let $(\Omega,\mu)$ be a $\sigma$-finite measure space and $L_0$ be the collection of
all complex-valued $\mu$-measurable functions on $\Omega$. A quasi-Banach function
space $X$ on $\Omega$ is called a {\it quasi-Banach lattice}
if for every $f\in X$ and $g\in L_0$ with $|g(x)|\le |f(x)|$ for
$\mu$-a.e. $x\in \Omega$,
one has $g\in X$ and $\|g\|_X\le \|f\|_X.$

\begin{definition}
Let $X_j \subset L_0$, $j\in\{0,1\}$,  be quasi-Banach lattices on $(\Omega,\mu)$
and $\theta\in(0,1)$. Then the {\it Calder\'on product $X_0^{1-\theta}X_1^\theta$}
of $X_0$ and $X_1$ is the collection of all functions $f \in L_0$ such that
\begin{eqnarray*}
\|f\|_{X_0^{1-\theta}X_1^\theta} := \inf\Bigl\{\|f_0\|_{X_0}^{1-\theta}\|f_1\|_{X_1}^\theta:\:
|f|\le |f_0|^{1-\theta}|f_1|^\theta \quad \mu \mbox{-a.e.},\ \
 f_j\in X_j, \, j\in\{0,1\}\Bigr\}
\end{eqnarray*}
is finite.
\end{definition}

The first result concerning the relation between complex interpolations and
Calder\'on products is due to Calder\'on \cite{ca64}. He showed that if $X_0$ and $X_1$
are Banach lattices, then  $[X_0,X_1]_\tz \hookrightarrow X_0^{1-\tz}X_1^\tz$.
Later, Shestakov \cite{she74} (see also \cite{s74,n85}) in 1974
proved that the complex interpolation of two Banach lattices $X_0$ and $X_1$
is just the closure of their intersection $X_0\cap X_1$ in their Calder\'on product, namely,
$[X_0,X_1]_\tz =\overline{X_0\cap X_1}^{\|\cdot\|_{X_0^{1-\tz}X_1^\tz}}$.
In 1998, Kalton and Mitrea \cite{km98}  considered more general
quasi-Banach cases.  Indeed, they proved in \cite[Theorem 3.4]{km98} that,
\emph{if $X_0$ and $X_1$ are analytically convex separable quasi-Banach lattices, then
$X_0+X_1$ is also analytically convex and
$[X_0,X_1]_\tz=X_0^{1-\tz}X_1^\tz$}. The proof of this result was noticed later in
\cite{kmm} to be also feasible for the coincidence
$[X_0,X_1]^i_\tz=X_0^{1-\tz}X_1^\tz$, and so in this case,
$$[X_0,X_1]_\tz =[X_0,X_1]^i_\tz =X_0^{1-\tz}X_1^\tz=\overline{X_0\cap X_1}^{\|\cdot\|_{X_0^{1-\tz}X_1^\tz}}.$$

Notice that in Kalton and Mitrea's result \cite[Theorem 3.4]{km98}, there is a condition
on the separability of the spaces $X_0$ and $X_1$. An interesting
question is, how is the relation between complex interpolations and Calder\'on
products of quasi-Banach lattices which are not separable?
Is Shestakov's result for Banach spaces also true for general quasi-Banach cases?
In this note we give a positive answer  for the inner complex interpolation.

\begin{theorem}\label{main}
Let $\Omega$ be a Polish space, $\mu$ a $\sigma$-finite Borel measure
on $\Omega$ and $(X_0, X_1)$ a pair of quasi-Banach lattices on $(\Omega,\mu)$.
If both $X_0$ and $X_1$  are analytically convex, then
$$[X_0,X_1]_\theta^i =\overline{X_0\cap X_1}^{\|\cdot\|_{X_0^{1-\tz}X_1^\tz}},\quad \tz\in(0,1).$$
\end{theorem}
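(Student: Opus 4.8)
The plan is to deduce the identity from the single statement that the quasi-norms $\|\cdot\|_{[X_0,X_1]^i_\theta}$ and $\|\cdot\|_{X_0^{1-\tz}X_1^\tz}$ are equivalent on the intersection $X_0\cap X_1$. Granting this, the theorem follows by a soft completion argument: $X_0\cap X_1$ is dense in $[X_0,X_1]^i_\theta$ (as recorded after the definition) and, by construction, dense in $\overline{X_0\cap X_1}^{\|\cdot\|_{X_0^{1-\tz}X_1^\tz}}$; both spaces are complete and embed continuously into the Hausdorff space $X_0+X_1$, so a sequence in $X_0\cap X_1$ that is Cauchy for one quasi-norm is Cauchy for the other and has the same limit in $X_0+X_1$. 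Hence the two completions are the same subset of $X_0+X_1$. Before starting I would also record, using Kalton's identification of analytic convexity with L-convexity together with the fact that L-convexity is inherited by sums of lattices, that $X_0+X_1$ is analytically convex, so that $[X_0,X_1]^i_\theta$ is well defined.

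For the inclusion $\|h\|_{X_0^{1-\tz}X_1^\tz}\lesssim\|h\|_{[X_0,X_1]^i_\theta}$ I would run Calder\'on's subharmonicity argument. Fix $f\in\cf_0$ with $f(\theta)=h$ and $\|f\|_\cf$ nearly minimal. For $\mu$-a.e.\ $x$ the scalar function $z\mapsto\log|f(z)(x)|$ is subharmonic on $U$, hence dominated by its Poisson integral against the harmonic measure of $U$ at $\theta$, whose two boundary parts carry masses $1-\theta$ and $\theta$. Exponentiating yields a pointwise factorization $|h|\le f_0^{1-\theta}f_1^\theta$, where $f_0,f_1$ are the geometric means of the boundary traces against the normalized Poisson kernels. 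The quasi-Banach difficulty is that the ensuing norm estimate for $f_0,f_1$ would use a continuous Minkowski inequality, which fails for quasi-norms; here I would use that analytic convexity is L-convexity, so that after an equivalent renorming each $X_j$ is $p$-convex for some $p\in(0,1]$. Jensen's inequality then gives $f_0\le(\int|f(it)|^p\,d\nu)^{1/p}$ pointwise for the normalized kernel $\nu$, and $p$-convexity gives $\|f_0\|_{X_0}\le(\int\|f(it)\|_{X_0}^p\,d\nu)^{1/p}\le\|f\|_\cf$, and symmetrically for $f_1$; taking the infimum over $f$ yields the claim.

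For the reverse inequality $\|h\|_{[X_0,X_1]^i_\theta}\lesssim\|h\|_{X_0^{1-\tz}X_1^\tz}$ I would use the explicit Calder\'on test function. Given a near-optimal factorization $|h|\le|f_0|^{1-\theta}|f_1|^\theta$ with $f_j\in X_j$, I first rescale $f_0,f_1$ (which preserves both the majorization and the product $\|f_0\|_{X_0}^{1-\theta}\|f_1\|_{X_1}^\theta$) so that $\|f_0\|_{X_0}=\|f_1\|_{X_1}$, and set $f(z):=\mathrm{sgn}(h)\,|f_0|^{1-z}|f_1|^{z}\,\omega$ with $\omega:=|h|/(|f_0|^{1-\theta}|f_1|^\theta)\le1$. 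Then $f(\theta)=h$, while $|f(it)|\le|f_0|$ and $|f(1+it)|\le|f_1|$, so $f$ is bounded and analytic into $X_0+X_1$ with $\|f\|_\cf\le\|f_0\|_{X_0}^{1-\theta}\|f_1\|_{X_1}^\theta\le\|h\|_{X_0^{1-\tz}X_1^\tz}+\varepsilon$. The only thing left to check, and this is the heart of the matter, is that $f$ belongs to the \emph{inner} class $\cf_0$, i.e.\ that it is approximable in $\cf$ by functions valued in $X_0\cap X_1$. When $f_0,f_1$ can be taken in $X_0\cap X_1$ this is automatic, since then $|f(z)|\le|f_0|+|f_1|\in X_0\cap X_1$ forces $f(z)\in X_0\cap X_1$ for every $z$; the obstruction is therefore to replace a general factorization by one with $f_0,f_1\in X_0\cap X_1$ at comparable cost.

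This last point is the main obstacle, because $X_0\cap X_1$ need not be dense in $X_0$ or $X_1$ (equivalently, the quasi-norms need not be order-continuous), which is precisely why a closure appears in the statement and why the separable theorem of Kalton and Mitrea does not apply verbatim. It is here that I would use that $\Omega$ is Polish and $\mu$ a $\sigma$-finite Borel measure: these hypotheses let me attach to $h$ and to a near-optimal factorization $f_0,f_1$ a countably generated sub-structure on which the couple becomes separable, where $X_0\cap X_1$ is dense in the Calder\'on product and the Kalton--Mitrea coincidence $[X_0,X_1]^i_\theta=X_0^{1-\tz}X_1^\tz$ is available; the test function so produced is built from $X_0\cap X_1$-data, hence lies in $\cf_0$ of the full couple and has $\cf$-norm controlled by the (restricted, hence essentially unchanged) Calder\'on norm of $h$. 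Carrying out this reduction rigorously, matching the restricted and ambient quasi-norms and verifying the requisite joint measurability and vector-valued analyticity of all the functions involved, is the technical core of the proof.
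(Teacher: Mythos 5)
Your first direction and the global completion framework are essentially the paper's: the embedding $[X_0,X_1]^i_\tz\hookrightarrow\overline{X_0\cap X_1}^{\|\cdot\|_{X_0^{1-\tz}X_1^\tz}}$ is proved there by the same Calder\'on subharmonicity/Poisson-kernel argument, with analytic convexity converted via $r$-convexity into a substitute for Minkowski's inequality; the only cosmetic difference is that the paper obtains subharmonicity of $z\mapsto\phi(|F(z)|^q)$ for positive functionals $\phi$ on the Banach space $(X_0+X_1)^{(1/q)}$, which sidesteps the measurability issues in your pointwise a.e.\ claim. The genuine gap is in the reverse embedding, exactly at the step you yourself isolate as the technical core. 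Your plan --- pass to a ``countably generated sub-structure on which the couple becomes separable'' and apply Kalton--Mitrea there --- cannot be carried out. Separability of a quasi-Banach function lattice is not a feature of the underlying measure space that the Polish/$\sigma$-finite hypotheses can restore on a sub-structure; it is essentially order continuity of the quasi-norm, an intrinsic property. Any sub-structure to which \cite[Theorem 3.4]{km98}, or even the Calder\'on product construction, applies must itself be a quasi-Banach lattice of measurable functions, i.e.\ an ideal (closed under passing to smaller moduli). But the closed ideal generated by a single function can already be non-separable: in $L^\infty([0,1])$ the ideal generated by $\mathbf{1}$ contains the $1$-separated uncountable family $\{\mathbf{1}_E\}$; in $\cm^u_p(\rn)$ with $p<u$, the ideal generated by $|x|^{-n/u}$ contains the uncountable, uniformly separated family $\{|x|^{-n/u}\mathbf{1}_D\}$ with $D$ ranging over unions of dyadic annuli. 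The same obstruction appears if you instead restrict to the sub-$\sigma$-algebra generated by $h,f_0,f_1$. So no separable sub-couple exists in general, Kalton--Mitrea is unavailable, and nothing in your argument places the test function $\mathrm{sgn}(h)\,|f_0|^{1-z}|f_1|^{z}\omega$ in $\cf_0$.

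The paper closes this direction by a different idea that avoids factorizations and separability altogether. By Nilsson's theorem \cite{n85} --- this is where the Polish/$\sigma$-finite hypotheses and analytic convexity (type $\mathfrak{E}$) are actually consumed --- the closure $\overline{X_0\cap X_1}^{\|\cdot\|_{X_0^{1-\tz}X_1^\tz}}$ coincides, with equivalent quasi-norms, with the Gagliardo--Peetre space $\laz X_0,X_1\raz_\tz$, whose elements by definition admit decompositions $a=\sum_{i\in\zz}a_i$, $a_i\in X_0\cap X_1$, with uniform $X_j$-control of $\sum_i\vez_i2^{i(j-\tz)}a_i$ over bounded scalar sequences $\{\vez_i\}$. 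Elements with finite decompositions are dense, and for such $f=\sum_{|k|\le M}f_k$ the explicit test function $F(z)=\sum_{|k|\le M}2^{k(z-\tz)}f_k$ is a finite sum of elements of $X_0\cap X_1$ --- hence automatically $X_0\cap X_1$-valued and in $\cf_0$ --- while the defining estimate of $\laz X_0,X_1\raz_\tz$, applied to the bounded sequences $\{2^{ka+ikb}\}$ and $\{2^{k(a-1)+ikb}\}$, yields $\|F\|_{\cf}\ls\|f\|_{\laz X_0,X_1\raz_\tz}$. This gives $\laz X_0,X_1\raz_\tz\hookrightarrow[X_0,X_1]^i_\tz$ and finishes the proof. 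If you want to keep your outline, the missing ingredient is precisely such an identification of the closure with a space built from unconditional $X_0\cap X_1$-decompositions; producing factorizations with factors in $X_0\cap X_1$ at comparable cost is not easier than this.
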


To prove Theorem \ref{main}, we use another interpolation method, the Gagliardo-Peetre interpolation introduced by Peetre \cite{p71}.  In 1985 Nilsson \cite{n85} proved
a general result concerning the
relation between Gagliardo-Peetre interpolation and Calder\'on product,
which is a key tool used in this paper. This proof is different from the one
used by Shestakov \cite{she74}
for Banach lattices.

Throughout the paper,
the \emph{symbol}  $C $ denotes   a positive constant
which  may vary from line to line.
The \emph{meaning of $A \ls B$} is
given by: there exists a positive constant $C$ such that
 $A \le C \,B$.
The \emph{symbol $A \sim B$} means
$A \ls B \ls A$.

\section{Proof of Theorem \ref{main}}

Let $X$ be a quasi-Banach lattice and $p\in[1,\fz]$.
The \emph{$p$-convexification} of $X$, denoted by $X^{(p)}$,
is defined as follows: $f\in X^{(p)}$ if and only if
$|f|^p\in X$. For all $f\in X^{(p)}$, define $\|f\|_{X^{(p)}}:=\||f|^p\|_X^{1/p}$.
The lattice $X$ is called \emph{$1/p$-convex} if $X^{(p)}$ is a Banach space.
Moreover, a quasi-Banach  lattice $X$ is said to \emph{be of type $\mathfrak{E}$},
if there exists an equivalent quasi-norm $|||\cdot|||_X$ such that $(X,|||\cdot|||_X)$
is {$1/p$-convex} for some $p\in[1,\fz)$; see \cite{n85}.

An important tool we used is the following equivalent characterization of
analytically convex quasi-Banach lattice; see, for example, \cite{k86,kmm}.
In what follows, $K_X$ denotes the \emph{modulus of concavity} of a
quasi-Banach space $X$, i.\,e., the smallest positive constant satisfying
$$\|x+y\|_X\le K_X(\|x\|_X+\|y\|_X),\quad x,y\in X.$$

\begin{proposition}\label{r-convex}
Let $X$ be a quasi-Banach lattice. Then the following assertions  are equivalent:

{\rm(i)} $X$ is analytically convex;

{\rm(ii)} there exists $r>0$ such that $X$ is $r$-convex, namely, $X^{(1/r)}$ is a Banach space;

{\rm(iii)} $X$ is $r$-convex for all $0<r<(1+\log_2K_X)^{-1}$.
\end{proposition}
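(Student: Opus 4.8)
The plan is to establish the cycle (iii) $\Rightarrow$ (ii) $\Rightarrow$ (i) $\Rightarrow$ (iii). The first implication is immediate, since (iii) already asserts $r$-convexity for an entire interval of exponents while (ii) only requires a single admissible $r$; so no work is needed there. The substance lies in the other two implications, and I expect the genuine obstacle to be (i) $\Rightarrow$ (iii), where one must both extract the lattice $r$-convexity inequality from a purely analytic hypothesis and identify the sharp exponent range in terms of $K_X$.

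For (ii) $\Rightarrow$ (i), write $B:=X^{(1/r)}$, which by assumption is a Banach lattice, and record the identity $\|h\|_X^r=\||h|^r\|_B$ for $h\in X$. Given an analytic $f:U\to X$ that is continuous and bounded on $\overline{U}$, I would first note that for $\mu$-a.e.\ $\omega$ the scalar map $z\mapsto f(z)(\omega)$ is holomorphic, so $z\mapsto\log|f(z)(\omega)|$ is subharmonic and hence $z\mapsto|f(z)(\omega)|^r=\exp\bigl(r\log|f(z)(\omega)|\bigr)$ is subharmonic and nonnegative. Fixing $z_0\in U$ and a small circle, the pointwise sub-mean value inequality gives $0\le|f(z_0)|^r\le W$ in the lattice, where $W$ is the average of $|f(z_0+\rho e^{i\theta})|^r$ interpreted as a Bochner integral in $B$; lattice monotonicity followed by the triangle inequality in the Banach space $B$ then yields $\|f(z_0)\|_X^r=\||f(z_0)|^r\|_B\le\frac{1}{2\pi}\int_0^{2\pi}\|f(z_0+\rho e^{i\theta})\|_X^r\,d\theta$. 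Thus $z\mapsto\|f(z)\|_X^r$ is subharmonic on $U$, and a Phragm\'en--Lindel\"of argument on the strip (legitimate since $f$ is bounded) produces $\sup_{z\in U}\|f(z)\|_X^r\le\max_{\Re z\in\{0,1\}}\|f(z)\|_X^r$, which is exactly analytic convexity, indeed with constant $C=1$.

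The hard direction is (i) $\Rightarrow$ (iii). The exponent $(1+\log_2K_X)^{-1}$ is precisely the Aoki--Rolewicz exponent, so I would first invoke the Aoki--Rolewicz theorem to pass to an equivalent quasi-norm that is a $p$-norm with $p=(1+\log_2K_X)^{-1}$; this calibration is the source of the sharp range. The remaining and principal task is to derive, for each $0<r<p$ and all $x_1,\dots,x_N\in X$, the convexity estimate $\|(\sum_k|x_k|^r)^{1/r}\|_X\ls(\sum_k\|x_k\|_X^r)^{1/r}$, which after an equivalent renorming makes $X^{(1/r)}$ a Banach space. The strategy is to encode the nonlinear power mean $(\sum_k|x_k|^r)^{1/r}$ as an interior value of an $X$-valued analytic function $f$ assembled from the data $x_k$ together with scalar weights $a_k^{\,z}$, chosen so that a power-mean/Poisson representation reconstructs $(\sum_k|x_k|^r)^{1/r}$ at a selected interior point, while on the edges $\Re z\in\{0,1\}$ the $p$-normed lattice structure bounds $\|f\|_X$ by $(\sum_k\|x_k\|_X^r)^{1/r}$; substituting $f$ into the analytic convexity inequality and optimizing over the weights then gives the claim, and the constraint $r<p$ is exactly what is needed for the boundary estimate to close. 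The delicate core I anticipate is the simultaneous control of this test function, namely arranging that an interior value dominates the power mean \emph{and} that both boundary traces obey the single-sum bound, uniformly in $N$; once this construction is in hand, analytic convexity and normalization of the resulting constant finish the proof and close the cycle, establishing the proposition.
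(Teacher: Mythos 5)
Two of your three implications are sound, but the paper itself offers no proof to compare against: Proposition 2.1 is quoted from Kalton \cite{k86} and Kalton--Mayboroda--Mitrea \cite{kmm}, so the benchmark is the proofs in those references. Your (iii) $\Rightarrow$ (ii) is indeed trivial, and your argument for (ii) $\Rightarrow$ (i) is correct in substance and standard; it is essentially the same device this paper uses inside the proof of its Theorem 2.2 (a.e.\ pointwise subharmonicity of $|f(z)(\omega)|^r$, circle averaging, the triangle inequality in the Banach space $X^{(1/r)}$, then the maximum principle on the strip). You do gloss over two technical points that need the continuous embedding of the lattice into $L_0$: analyticity of $f$ in $X$ yields scalar holomorphic representatives $z\mapsto f(z)(\omega)$ only via a subsequence/countable-dense-set argument, and the Bochner average in $X^{(1/r)}$ must be identified $\mu$-a.e.\ with the pointwise average. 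These are repairable.

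The genuine gap is (i) $\Rightarrow$ (iii), which is the entire content of the proposition and which you do not prove: the construction you yourself call the ``delicate core'' is deferred (``once this construction is in hand\,\dots''), so nothing beyond (iii) $\Rightarrow$ (ii) $\Rightarrow$ (i) is established. Worse, the ansatz you describe --- $F(z)=\sum_k a_k(z)x_k$ with scalar analytic weights, interior value dominating the power mean, boundary traces estimated through the $p$-norm triangle inequality --- provably cannot close. Test it on data realizing all indicator configurations, e.g.\ $x_k=\sum_{S\ni k}\chi_{A_S}$ in $X=L^p$ with $S$ ranging over subsets of $\{1,\dots,N\}$ and the $A_S$ disjoint of equal measure: pointwise domination of $(\sum_k|x_k|^r)^{1/r}$ at $z=\theta$ forces $|\sum_{k\in S}a_k(\theta)|\ge|S|^{1/r}$ for every $S$; applying this to the $m$ smallest moduli shows the $m$-th smallest of the $|a_k(\theta)|$ is at least $m^{1/r-1}$, whence $\sum_k|a_k(\theta)|^p\gtrsim N^{1+p/r-p}$. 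Since $\sum_k|a_k(z)|^p$ is subharmonic and bounded, its boundary supremum is at least as large, so at some boundary point your $p$-triangle bound $(\sum_k|a_k|^p\|x_k\|_X^p)^{1/p}$ is $\gtrsim N^{1/p+1/r-1}\min_k\|x_k\|_X$, exceeding the target $(\sum_k\|x_k\|_X^r)^{1/r}\sim N^{1/r}\|x_k\|_X$ by the unbounded factor $N^{1/p-1}$ (this is exactly where $p<1$, the quasi-Banach case, bites; for $p=1$ the linear-majorant idea does work, via the concave Krivine calculus). So linear combinations with scalar weights plus the $p$-triangle inequality cannot produce the boundary estimate, and for an abstract lattice there is no other handle on $\|F\|_X$ for such $F$. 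The actual proofs in \cite{k86,kmm} are structurally different: analytic convexity is first shown to imply Kalton's lattice condition of $L$-convexity, using pointwise \emph{multiplicative} (Calder\'on-type) analytic families such as $u\prod_i(x_i/u)^{\lambda_i z}$ and plurisubharmonicity, and a separate lattice-theoretic theorem then converts $L$-convexity into $r$-convexity, with the exponent range $0<r<(1+\log_2K_X)^{-1}$ emerging in that second step rather than from an Aoki--Rolewicz renorming at the outset. Without that chain, or a genuine substitute for it, the hard direction of the proposition remains unproved.
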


It follows from Proposition \ref{r-convex} that all analytically convex
quasi-Banach lattices are of type $\mathfrak{E}$.

We now prove one direction of Theorem \ref{main} in the following theorem, which can be proved by an argument similar to  that used for \cite[Theorem 3.4]{km98}.
For the sake of convenience, we give some details here.

\begin{theorem}
Let $\Omega$ be a Polish space, $\mu$ a $\sigma$-finite Borel measure
on $\Omega$ and $(X_0, X_1)$ a pair of quasi-Banach lattices of functions on $(\Omega,\mu)$.
If both $X_0$ and $X_1$  are analytically convex, then
$$[X_0,X_1]_\theta^i \hookrightarrow \overline{X_0\cap X_1}^{\|\cdot\|_{X_0^{1-\tz}X_1^\tz}},\quad \tz\in(0,1).$$
\end{theorem}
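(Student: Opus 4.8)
The plan is to deduce the embedding from the single Calder\'on-type estimate
\begin{equation}\label{e-cald}
\|f(\tz)\|_{X_0^{1-\tz}X_1^\tz}\ls\|f\|_\cf,\qquad f\in\cf,
\end{equation}
and then to feed it into the defining structure of $\cf_0$. Granting \eqref{e-cald}, suppose $g\in[X_0,X_1]^i_\tz$ and choose $f\in\cf_0$ with $f(\tz)=g$; by definition of $\cf_0$ there are $f^{(k)}\in\cf$ with $f^{(k)}(z)\in X_0\cap X_1$ for all $z\in U$ and $\|f^{(k)}-f\|_\cf\to0$. Applying \eqref{e-cald} to the differences $f^{(k)}-f^{(j)}$ shows that $\{f^{(k)}(\tz)\}_k$ is a Cauchy sequence in the (complete) Calder\'on product $X_0^{1-\tz}X_1^\tz$ whose terms all lie in $X_0\cap X_1$. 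Since $X_0^{1-\tz}X_1^\tz\hookrightarrow X_0+X_1$ (by the weighted arithmetic--geometric mean inequality) and, by the analytic convexity of $X_0+X_1$, $f^{(k)}(\tz)\to f(\tz)=g$ in $X_0+X_1$, the Calder\'on-norm limit of $\{f^{(k)}(\tz)\}_k$ must equal $g$. Hence $g\in\overline{X_0\cap X_1}^{\|\cdot\|_{X_0^{1-\tz}X_1^\tz}}$, and passing to the limit in $\|f^{(k)}(\tz)\|_{X_0^{1-\tz}X_1^\tz}\ls\|f^{(k)}\|_\cf$ and taking the infimum over admissible $f$ gives $\|g\|_{X_0^{1-\tz}X_1^\tz}\ls\|g\|_{[X_0,X_1]^i_\tz}$. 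This is exactly the asserted embedding, so everything reduces to \eqref{e-cald}.

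To establish \eqref{e-cald} I would first fix, via Proposition \ref{r-convex}(iii), a single $r\in(0,\fz)$ for which both $X_0$ and $X_1$ are $r$-convex, so that $X_0^{(1/r)}$ and $X_1^{(1/r)}$ are Banach lattices. The next step is to replace the abstract analytic function $f\colon U\to X_0+X_1\subset L_0$ by a jointly measurable representative $F$ on $U\times\Omega$ for which $z\mapsto F(z,x)$ is analytic for $\mu$-a.e.\ $x$ and whose boundary traces agree with those of $f$. For $\mu$-a.e.\ $x$ the function $z\mapsto\log|F(z,x)|$ is then subharmonic on $U$, so the harmonic-measure (Poisson--Jensen) majorization on the strip yields
\begin{equation*}
\log|g(x)|=\log|F(\tz,x)|\le\int_\rr\log|F(it,x)|\,d\omega_0(t)+\int_\rr\log|F(1+it,x)|\,d\omega_1(t),
\end{equation*}
where $\omega_0,\omega_1$ are the harmonic measures of the two boundary lines seen from $\tz$, of total mass $1-\tz$ and $\tz$, respectively. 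Setting
\begin{equation*}
f_0(x):=\exp\lf(\tfrac{1}{1-\tz}\int_\rr\log|F(it,x)|\,d\omega_0(t)\r),\qquad f_1(x):=\exp\lf(\tfrac{1}{\tz}\int_\rr\log|F(1+it,x)|\,d\omega_1(t)\r),
\end{equation*}
the majorization gives $|g|\le|f_0|^{1-\tz}|f_1|^\tz$ $\mu$-a.e., the set where $\log|F|=-\fz$ being handled by the usual truncation $\log|F|\vee(-N)$ followed by $N\to\fz$.

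It then remains to bound $\|f_0\|_{X_0}$ and $\|f_1\|_{X_1}$. Writing $d\nu_0:=(1-\tz)^{-1}\,d\omega_0$, a probability measure, one has $|f_0|^r=\exp(\int_\rr\log(|F(it,\cdot)|^r)\,d\nu_0(t))$, and since $X_0^{(1/r)}$ is a Banach lattice the Calder\'on--Lozanovskii geometric-mean inequality applies:
\begin{equation*}
\lf\||f_0|^r\r\|_{X_0^{(1/r)}}\le\exp\lf(\int_\rr\log\lf\||F(it,\cdot)|^r\r\|_{X_0^{(1/r)}}\,d\nu_0(t)\r)\le\sup_{t\in\rr}\lf\||F(it,\cdot)|^r\r\|_{X_0^{(1/r)}}.
\end{equation*}
Recalling the identity $\|v\|_{X_0}=\||v|^r\|_{X_0^{(1/r)}}^{1/r}$, this reads $\|f_0\|_{X_0}\ls\sup_{t}\|f(it)\|_{X_0}\le\|f\|_\cf$; the same computation in $X_1^{(1/r)}$ gives $\|f_1\|_{X_1}\ls\|f\|_\cf$. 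Combining, $\|g\|_{X_0^{1-\tz}X_1^\tz}\le\|f_0\|_{X_0}^{1-\tz}\|f_1\|_{X_1}^\tz\ls\|f\|_\cf$, which is \eqref{e-cald}.

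The genuinely delicate point, and the one I expect to be the main obstacle, is the construction of the jointly measurable, pointwise-analytic representative $F$ (together with the a.e.\ upper boundedness of $z\mapsto\log|F(z,x)|$ needed for the Poisson--Jensen majorization and the measurability of $f_0,f_1$) in the \emph{absence} of separability of $X_0$ and $X_1$. This is precisely where the hypotheses that $\Omega$ is Polish and $\mu$ is a $\sigma$-finite Borel measure enter: they make the ambient space $L_0(\Omega,\mu)$ a separable metrizable topological vector space into which $X_0+X_1$ continuously embeds, so that the classical theory of $L_0$-valued analytic functions supplies $F$ and lets the subharmonicity argument be carried out fibrewise. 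By contrast, once $r$ is fixed by Proposition \ref{r-convex}, the transfer of the geometric-mean inequality through the $r$-convexification is entirely routine, and the reduction in the first paragraph only ever invokes \eqref{e-cald} for functions with values in $X_0\cap X_1$, which somewhat eases the boundedness bookkeeping.
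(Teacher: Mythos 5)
Your reduction in the first paragraph (passing from the Calder\'on-type estimate $\|f(\tz)\|_{X_0^{1-\tz}X_1^\tz}\ls\|f\|_{\cf}$ to the embedding, via the approximating sequence from $\cf_0$, completeness of the Calder\'on product, and identification of the limits inside $X_0+X_1$) is sound, and it is in substance the same reduction the paper performs when it invokes the density of $X_0\cap X_1$ in $[X_0,X_1]^i_\tz$; you are, if anything, more explicit about the ingredients. The problem lies in your proof of the key estimate itself.

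The genuine gap is exactly the step you flagged and then waved away: the fibrewise Poisson--Jensen majorization. To apply the harmonic-measure inequality on the strip to $z\mapsto\log|F(z,x)|$ for a.e.\ fixed $x$, you need, for a.e.\ $x$: (i) an upper bound (or at least a Phragm\'en--Lindel\"of-type growth restriction) on $|F(z,x)|$ over all of $U$, and (ii) fibrewise boundary control, i.e.\ $\limsup_{z\to j+it}|F(z,x)|\le|F(j+it,x)|$ for a.e.\ $t$, $j\in\{0,1\}$. Neither follows from $f\in\cf$. The norm bound $\sup_{z\in U}\|F(z)\|_{X_0+X_1}<\fz$ gives no pointwise bound on the fibres, and norm continuity up to $\overline{U}$ only yields convergence locally in measure, hence a.e.\ convergence of subsequences for each \emph{fixed} boundary point, not simultaneous a.e.-in-$x$ control over the uncountable boundary. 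Without (i) the majorization is simply false for subharmonic functions on a strip: writing $z=a+it$, the harmonic function $\Re\bigl(-ie^{i\pi z}\bigr)=e^{-\pi t}\sin(\pi a)$ is nonnegative on $U$, vanishes identically on $\partial U$, yet is unbounded, so its value at $\tz$ strictly exceeds its boundary Poisson integral. The separability of $L_0(\Omega,\mu)$ under the Polish hypothesis does produce the jointly measurable, fibrewise-analytic representative on the \emph{open} strip (via local power series, exactly as in the paper), but it supplies neither (i) nor (ii), so your appeal to ``the classical theory of $L_0$-valued analytic functions'' does not close the argument.

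The paper's proof is engineered precisely to avoid this point, and that is where the two arguments genuinely diverge. The paper uses fibrewise analyticity only \emph{locally}: the mean-value inequality for $|F(\cdot)(w)|^q$ over circles compactly contained in $U$. It then transfers subharmonicity to the scalar functions $z\mapsto\phi(|F(z)|^q)$, where $\phi$ is a positive functional on the Banach lattice $(X_0+X_1)^{(1/q)}$; these functions inherit boundedness and continuity up to $\overline{U}$ from the \emph{norm} properties of $F$, so the Poisson majorization on the strip applies to them legitimately. The pointwise factorization $|f|\le f_0^{1-\tz}f_1^\tz$ is then recovered not from Jensen's inequality on logarithms but from the arithmetic bounds $|F(\tz)|^q\le(1-\tz)f_0^q+\tz f_1^q$ (positive functionals determine the order) by letting $q\to0$; correspondingly the majorants $f_j$ are Poisson averages of $|F(j+it)|^r$, whose norm bounds come from $r$-convexity alone, rather than geometric means requiring a continuous-parameter Lozanovskii inequality (which, incidentally, is a second step in your outline that would need its own justification via vector-valued integration of the traces). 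To salvage your route you would either have to establish (i) and (ii) for general $F\in\cf$ --- which I do not believe is possible --- or restructure the argument along the paper's functional lines.
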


\begin{proof}
Since the lattices  $X_0$, $X_1$ are analytically convex, by Proposition
\ref{r-convex}, we know
  that there exists $r\in(0,1]$ such that $X_0$, $X_1$ are both $r$-convex lattices.
By \cite[Theorem 3.4]{km98} and its proof, $X_0+X_1$ is also $r$-convex and hence $(X_0+X_1)^{(1/r)}$ is a Banach space.

Since $X_0\cap X_1$ is dense in $[X_0,X_1]_\theta^i$, it suffices to prove
$$(X_0\cap X_1, \|\cdot\|_{[X_0,X_1]_\theta^i})\hookrightarrow \overline{X_0\cap X_1}^{\|\cdot\|_{X_0^{1-\tz}X_1^\tz}}.$$
Let $f\in X_0\cap X_1$. Then for any $\varepsilon>0$
there exists $F\in \cf_0(X_0,X_1)$ such that $F(\tz)=f$ and $\|F\|_{\cf(X_0,X_1)}\le \|f\|_{[X_0,X_1]_\theta^i}+\varepsilon$.
Since $F$ is analytic in $U$ and  continuous in  $\overline{U}$,
for any $z_0\in U$, there exist $R>0$
and $f_k\in X_0+X_1$  such that
$F(z)=\sum_{k\in\nn_0} f_k (z-z_0)^k$
with uniformly convergent in $X_0+X_1$ for all $|z-z_0|<R$. Moreover, due to the Cauchy-Hadamard theorem, it holds  $\limsup_{k\to\fz}\|f_k\|_{X_0+X_1}^{1/k}\le R^{-1}$.
We also know that  for $\mu$-almost every $w\in \Omega$,  any $\rho<R$ and $q\le r$,
$$|F(z_0)(w)|^q\le \frac1{2\pi}\int_0^{2\pi} |F(z_0+\rho e^{it})(w)|^q\,dt.$$
Since $z\mapsto |F(z)|^q$ is continuous into $(X_0+X_1)^{(1/q)}$,
we know that, for any positive continuous functional $\phi\in ((X_0+X_1)^{(1/q)})^*$,
\begin{eqnarray*}
\phi(|F(z_0)|^q)&&\le \frac1{2\pi}\phi\left(\int_0^{2\pi} |F(z_0+\rho e^{it})|^q\,dt\right)\le\frac1{2\pi}\int_0^{2\pi} \phi\left(|F(z_0+\rho e^{it})|^q\right)\,dt,
\end{eqnarray*}
hence $z \mapsto \phi(|F(z)|^q)$ is subharmonic on $U$.  Then
\begin{eqnarray}\label{e1}
\phi(|F(\tz)|^q)&&\le \int_\rr P_0(\tz,t)\phi(|F(it)|^q)\,dt+\int_\rr P_1(\tz,t)\phi(|F(1+it)|^q)\,dt,
\end{eqnarray}
where $P_0$ and $P_1$ are the components of the
Poisson kernel on $U$ satisfying
$\int_\rr P_0(\tz,t)\,dt=1-\tz$ and $\int_\rr P_1(\tz,t)\,dt=\tz$.
Let $f_0:=((1-\tz)^{-1}\int_\rr P_0(\tz,t)|F(it)|^r\,dt)^{1/r}$ and $f_1:=((1-\tz)^{-1}\int_\rr P_0(\tz,t)|F(it)|^r\,dt)^{1/r}$. It follows from the
$r$-convexity of $X_0$ and $X_1$ that $f_j\in X_j$ with $\|f_j\|_{X_j}\le \|F\|_{\cf(X_0,X_1)}$, $j\in\{0,1\}$. By \eqref{e1}, $q\le r$
 and the positivity of $\phi$, we have $|F(\tz)|^q\le (1-\tz)f_0^q+\tz f_1^q$. Taking $log$ in both side and letting $q\to0$  then gives  $|f|=|F(\tz)|\le f_0^{1-\tz}f_1^\tz$.
Thus, $\|f\|_{X_0^{1-\tz}X_1^\tz}\le \|F\|_{\cf(X_0,X_1)}\ls \|f\|_{[X_0,X_1]_\tz^i}$, as desired.
\end{proof}

To show the other direction, we need the following Gagliardo-Peetre interpolation method,
which was introduced by Peetre \cite{p71}.

\begin{definition}
Let $X_0$ and $X_1$ be a pair quasi-Banach spaces  and $\theta \in (0,1)$.
We say $a\in \laz X_0, X_1\raz_\theta$ if there exists a sequence $\{a_i\}_{i\in\zz}
\subset X_0\cap X_1$ such that $a=\sum_{i\in\zz} \, a_i$ with convergence in $X_0+X_1$
and for any bounded sequence $\{\varepsilon_i\}_{i\in\zz}\subset\cc$,
$\sum_{i\in\zz} \varepsilon_i \, 2^{i(j-\theta)} \, a_i$
converges in $X_j$, $j\in\{0,1\}$. We further require that
\[
\lf\|\sum_{i\in\zz} \varepsilon_i \, 2^{i(j-\theta)} \, a_i\r\|_{X_j}\le
C \, \sup_{i\in\zz}|\varepsilon_i|,\quad j\in\{0,1\},
\]
for some constant $C$. As a  quasi-norm of
$\laz X_0, X_1\raz_\tz$, we use $\|a\|_{\laz X_0, X_1\raz_\Theta}:=\inf C$
\end{definition}

Applying Proposition \ref{r-convex} and \cite[Theorem 2.1]{n85} (see \cite[(2.1)]{n85}),
we have the following  conclusion.

\begin{theorem}
Let $\Omega$ be a Polish space, $\mu$ a $\sigma$-finite Borel measure
on $\Omega$ and $(X_0, X_1)$ a pair of quasi-Banach lattices of functions on $(\Omega,\mu)$.
If both $X_0$ and $X_1$  are analytically convex, then
$$\laz X_0,X_1\raz_\tz =\overline{X_0\cap X_1}^{\|\cdot\|_{X_0^{1-\tz}X_1^\tz}},\quad \tz\in(0,1),$$
and $\|\cdot\|_{\laz X_0,X_1\raz_\tz}$ is equivalent to $\|\cdot\|_{X_0^{1-\tz}X_1^\tz}$.
\end{theorem}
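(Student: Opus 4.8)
The plan is to obtain the identity directly from Nilsson's general theorem relating the Gagliardo--Peetre functor to the Calderón product for couples of lattices of type $\mathfrak{E}$, i.e.\ \cite[Theorem 2.1]{n85} together with the formula \cite[(2.1)]{n85}. Accordingly the argument splits into two tasks: first, checking that the couple $(X_0,X_1)$ satisfies the structural hypothesis of that theorem; and second, converting Nilsson's quasi-norm equivalence into the asserted set equality with $\overline{X_0\cap X_1}^{\|\cdot\|_{X_0^{1-\tz}X_1^\tz}}$.

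First I would verify the hypothesis. Since $X_0$ and $X_1$ are analytically convex, Proposition \ref{r-convex} supplies a common $r\in(0,1]$ — any $r$ below the minimum of the two thresholds $(1+\log_2 K_{X_j})^{-1}$ in part~(iii) — for which both $X_0^{(1/r)}$ and $X_1^{(1/r)}$ are Banach spaces. Thus each $X_j$ is $1/p$-convex with $p=1/r$ and hence of type $\mathfrak{E}$, as already noted after Proposition \ref{r-convex}. The Polish and $\sigma$-finiteness assumptions on $(\Omega,\mu)$ supply the measure-theoretic background Nilsson's argument needs: a standard $\sigma$-finite function-space representation of the lattices carrying enough positive (order-continuous) functionals to run the duality and subharmonicity estimates — the same mechanism that produced \eqref{e1} above. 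With these in hand, \cite[Theorem 2.1]{n85} applies to $(X_0,X_1)$.

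It then remains to read off the conclusion. Nilsson's theorem gives the two-sided bound $\|g\|_{\laz X_0,X_1\raz_\tz}\sim\|g\|_{X_0^{1-\tz}X_1^\tz}$ on $X_0\cap X_1$, together with the embedding $\laz X_0,X_1\raz_\tz\hookrightarrow X_0^{1-\tz}X_1^\tz$. The embedding is the easy direction: writing $g=\sum_{i\in\zz}a_i$ and choosing the signs $\vez_i$ suitably yields $f_j\in X_j$ with $|g|\le|f_0|^{1-\tz}|f_1|^\tz$ pointwise and $\|f_j\|_{X_j}\ls\|g\|_{\laz X_0,X_1\raz_\tz}$, a Hölder-type estimate on the dyadic decomposition. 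Moreover, by definition every element of $\laz X_0,X_1\raz_\tz$ is a sum of elements of $X_0\cap X_1$ convergent in $X_0+X_1$, so the whole space already lies in $\overline{X_0\cap X_1}^{\|\cdot\|_{X_0^{1-\tz}X_1^\tz}}$. Since $X_0\cap X_1$ is dense in $\laz X_0,X_1\raz_\tz$ and the two quasi-norms agree there up to multiplicative constants, passing to closures identifies the two spaces and establishes the quasi-norm equivalence.

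The hard part is the nontrivial half of Nilsson's equivalence, $\|g\|_{\laz X_0,X_1\raz_\tz}\ls\|g\|_{X_0^{1-\tz}X_1^\tz}$ on $X_0\cap X_1$: given $|g|\le|f_0|^{1-\tz}|f_1|^\tz$ with $f_j\in X_j$, one must build a decomposition $g=\sum_{i\in\zz}a_i$, indexed dyadically by the level sets of $|f_1|/|f_0|$, whose twisted rearrangements $\sum_i\vez_i\,2^{i(j-\tz)}a_i$ converge in $X_j$ with bound $\ls\sup_i|\vez_i|\,\|g\|_{X_0^{1-\tz}X_1^\tz}$. It is precisely here that type $\mathfrak{E}$ ($r$-convexity) is indispensable, since it turns the pointwise domination into the required lattice-norm summability after raising to the power $r$; this is the content of \cite[(2.1)]{n85}, which I would invoke rather than reprove.
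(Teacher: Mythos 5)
Your proposal takes essentially the same route as the paper: the paper's entire proof of this theorem consists of observing that analytic convexity yields $r$-convexity via Proposition \ref{r-convex} (hence each $X_j$ is of type $\mathfrak{E}$) and then invoking Nilsson's Theorem 2.1 together with formula (2.1) of \cite{n85}, which is precisely your plan. The one imprecision—your claim that convergence of $\sum_i a_i$ in $X_0+X_1$ already places elements of $\laz X_0,X_1\raz_\tz$ in $\overline{X_0\cap X_1}^{\|\cdot\|_{X_0^{1-\tz}X_1^\tz}}$ (convergence in $X_0+X_1$ does not by itself give convergence in the Calder\'on-product quasi-norm)—is immaterial here, since Nilsson's cited result delivers the set identity with the closure directly rather than only a norm equivalence on $X_0\cap X_1$.
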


From this conclusion, we
deduce that $X_0\cap X_1$ is dense in $\laz X_0,X_1\raz_\tz$
and, to prove  $\overline{X_0\cap X_1}^{\|\cdot\|_{X_0^{1-\tz}X_1^\tz}}\hookrightarrow[X_0,X_1]_\theta^i$, it
suffices to show
$\laz X_0,X_1\raz_\tz\hookrightarrow[X_0,X_1]_\theta^i$.

\begin{theorem}
Let $\Omega$ be a Polish space, $\mu$ a $\sigma$-finite Borel measure
on $\Omega$ and $(X_0, X_1)$ a pair of quasi-Banach lattices of functions on $(\Omega,\mu)$.
If both $X_0$ and $X_1$  are analytically convex, then
$$\laz X_0,X_1\raz_\tz\hookrightarrow[X_0,X_1]_\theta^i,\quad \tz\in(0,1).$$
\end{theorem}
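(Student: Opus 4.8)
The plan is to realize each element of $\laz X_0,X_1\raz_\tz$ as the value at $\tz$ of an admissible function lying in $\cf_0(X_0,X_1)$. Given $a\in\laz X_0,X_1\raz_\tz$, choose a representation $a=\sum_{k\in\zz}a_k$ with $\{a_k\}\subset X_0\cap X_1$ as in the definition, and set
\[
F(z):=\sum_{k\in\zz}2^{k(z-\tz)}a_k,\qquad z\in\overline U.
\]
Then $F(\tz)=\sum_k a_k=a$, while on the boundary line $\Re e\,z=j$ we have $F(j+it)=\sum_k 2^{kit}\,2^{k(j-\tz)}a_k$, which is exactly the series in the definition of $\laz X_0,X_1\raz_\tz$ evaluated at the bounded (indeed unimodular) multiplier $\vez_k:=2^{kit}$. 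Hence each trace converges in $X_j$ and
\[
\sup_{t\in\rr}\|F(j+it)\|_{X_j}\le C\,\|a\|_{\laz X_0,X_1\raz_\tz},\qquad j\in\{0,1\},
\]
so that, once $F$ is shown to be admissible, $\|F\|_\cf\ls\|a\|_{\laz X_0,X_1\raz_\tz}$.

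Next I would check that $F\in\cf$. Taking the multiplier supported at a single index shows $\|a_k\|_{X_0}\ls 2^{k\tz}$ and $\|a_k\|_{X_1}\ls 2^{-k(1-\tz)}$. Combining these bounds with the $s$-norm furnished by the Aoki--Rolewicz theorem (equivalently, with the $r$-convexity of $X_0,X_1$ granted by Proposition \ref{r-convex}), and splitting the sum into $k\ge 0$ (estimated in $X_1$) and $k<0$ (estimated in $X_0$), one gets geometric convergence of the tails of $F$ in $X_0+X_1$, locally uniformly on $U$. As a locally uniform limit of the entire partial sums, $F$ is analytic into $X_0+X_1$; its boundedness on all of $U$ follows from the boundary bounds above together with the analytic convexity of $X_0+X_1$ (which is $r$-convex by the argument already used in the first theorem). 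The continuity of the traces into $X_j$ will be a by-product of the uniform estimate established in the next step.

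The crux is to upgrade $F\in\cf$ to $F\in\cf_0$. Writing $b_k:=2^{k(j-\tz)}a_k$ and $F_N(z):=\sum_{|k|\le N}2^{k(z-\tz)}a_k$, each $F_N$ is entire with values in $X_0\cap X_1$, so it suffices to prove $F_N\to F$ in $\cf$, i.e.
\[
\sup_{t\in\rr}\Bigl\|\sum_{|k|>N}2^{kit}b_k\Bigr\|_{X_j}\xrightarrow[N\to\fz]{}0,\qquad j\in\{0,1\}.
\]
The defining property of $\laz X_0,X_1\raz_\tz$, restricted to sign multipliers $\vez_k\in\{\pm1\}$, says precisely that $\sum_k b_k$ converges unconditionally in $X_j$. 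I then claim the bounded--multiplier tails vanish uniformly, namely $\sup_{|\vez_k|\le 1}\|\sum_{|k|>N}\vez_k b_k\|_{X_j}\to 0$. To see this I would expand a bounded complex multiplier dyadically as a superposition $\vez_k=\sum_{m\ge 0}2^{-m}c_k^{(m)}$ with $c_k^{(m)}$ taking finitely many unimodular-type values, estimate $\|\sum_{|k|>N}\vez_k b_k\|_{X_j}$ by the $s$-norm over $m$, and bound each signed subset sum by the unconditional Cauchy criterion for $\sum_k b_k$; the geometric factor $2^{-ms}$ makes the series in $m$ summable. Since $|2^{kit}|=1$ for every $t$, this yields the displayed uniform convergence, hence $F_N\to F$ in $\cf$ and, in particular, the continuity of the traces $t\mapsto F(j+it)$ claimed above. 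Therefore $F\in\cf_0$, and since $F(\tz)=a$ we conclude $a\in[X_0,X_1]^i_\tz$ with $\|a\|_{[X_0,X_1]^i_\tz}\le\|F\|_\cf\ls\|a\|_{\laz X_0,X_1\raz_\tz}$. I expect the uniform vanishing of the bounded--multiplier tails, i.e. the passage from $\cf$ to $\cf_0$, to be the only genuinely delicate point, precisely because the $\ell^\infty$-bound in the definition controls tails only in size and not in smallness; the unconditional convergence hidden in the hypothesis is what rescues it.
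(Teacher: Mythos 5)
Your proposal is correct, and at the decisive step it takes a genuinely different route from the paper. Both arguments use the same canonical function $F(z)=\sum_k 2^{k(z-\tz)}a_k$ and extract every boundary estimate from the bounded-multiplier property with multipliers such as $2^{kit}$ and $2^{kz}$; the difference is how the infinite sum is tamed. The paper never confronts it: it introduces the subspace $D(X_0,X_1,\tz)$ of elements admitting \emph{finite} representations, observes that $X_0\cap X_1\subset D(X_0,X_1,\tz)$, so that $D(X_0,X_1,\tz)$ is dense in $\laz X_0,X_1\raz_\tz$ (this leans on the preceding, Nilsson-based theorem identifying $\laz X_0,X_1\raz_\tz$ with the closure of $X_0\cap X_1$ in the Calder\'on product), and for a finite sum $F$ is trivially entire and $X_0\cap X_1$-valued, hence in $\cf_0$ outright; the embedding then extends by density, using completeness of $[X_0,X_1]^i_\tz$. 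You instead keep the full representation and prove $F_N\to F$ in $\cf$, which forces exactly the lemma you isolate: uniform vanishing of bounded-multiplier tails. Your two-step proof of it is sound, and soundly quasi-Banach: the gliding-hump/block argument showing that convergence of $\sum_k\varepsilon_k b_k$ for all $\{-1,0,1\}$-valued $\varepsilon$ forces the tails to be uniformly small over such $\varepsilon$ uses only the quasi-triangle inequality and completeness (no convex-hull step, which is what breaks in the classical Banach-space proof), and the dyadic expansion of a general bounded multiplier, resummed with the Aoki--Rolewicz $s$-norm, upgrades this to all $|\varepsilon_k|\le 1$. What each approach buys: the paper's reduction is shorter and elementary, but it is not self-contained (density comes from the Calder\'on-product theorem, and the extension needs completeness of the target); your argument embeds $\laz X_0,X_1\raz_\tz$ into $[X_0,X_1]^i_\tz$ directly, without invoking the Calder\'on product at all, and en route establishes a bounded multiplier test for unconditionally convergent series in quasi-Banach lattices that is of independent use.

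Two points you should not gloss over, both repairable with tools you already invoke. First, ``a locally uniform limit of analytic functions is analytic'' is \emph{false} in general quasi-Banach spaces (this pathology is precisely what analytic convexity excludes), so at this step you must cite the analytic convexity ($r$-convexity) of $X_0+X_1$ -- which you do have, by the Kalton--Mitrea argument used in the first theorem -- or verify analyticity by hand, expanding $2^{k(z-\tz)}$ about $z_0$ and using the single-index bounds $\|a_k\|_{X_0}\ls 2^{k\tz}$ and $\|a_k\|_{X_1}\ls 2^{-k(1-\tz)}$ to control the coefficients. Second, the maximum principle you invoke for boundedness of $F$ on $U$ presupposes continuity up to $\overline{U}$, which you only obtain later; it is cleaner to bound $\|F(z)\|_{X_0+X_1}$ directly, splitting the sum at $k=0$ and applying the multiplier estimate with $\varepsilon_k=2^{kz}$ for $k<0$ (in $X_0$) and $\varepsilon_k=2^{k(z-1)}$ for $k\ge 0$ (in $X_1$), exactly as the paper does for its finite sums; the same splitting applied to tails also gives continuity of $F$ on $\overline{U}$ at once.
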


\begin{proof} Let $D(X_0,X_1,\tz)$ be the subspace of $\laz X_0,X_1\ra_\tz$
consisting of all $f\in \laz X_0,X_1\raz_\tz$ such that there exists
a finite set $E\subset \zz$ and $\{f_k\}_{k\in E}\subset X_0\cap X_1$ such that
$f=\sum_{k\in E} f_k$ in $X_0+X_1$, and for any bounded sequence
$\{\varepsilon_k\}_{k\in E}$ of complex numbers
$\sum_{k\in E} \varepsilon_k 2^{k(j-\tz)}f_k$ converges in $X_j$, with
$$\lf\|\sum_{k\in E} \varepsilon_k 2^{k(j-\tz)}f_k\r\|_{X_j}\ls \|f\|_{\laz X_0,X_1\raz_\tz} \sup_{k\in E}|\varepsilon_k|, \quad j\in\{0,1\}.$$
Obviously, $X_0\cap X_1 \subset D(X_0,X_1,\tz)$, and hence $D(X_0,X_1, \tz)$
is  dense  in $\laz X_0,X_1\raz_\tz$. To complete the proof,
it suffices to show
$$(D(X_0,X_1,\tz),\|\cdot\|_{\laz X_0,X_1\raz_\tz})\hookrightarrow[X_0,X_1]_\theta^i.$$

Let $f\in D(X_0,X_1,\tz).$ Without loss of generality, we may assume that
$f=\sum_{|k|\le M} f_k$ in $X_0+X_1$ for some $M\in\nn$ and $\{f_k\}_{|k|\le M}\subset X_0\cap X_1$, and
\begin{eqnarray}\label{con1}
\lf\|\sum_{|k|\le M} \varepsilon_k 2^{k(j-\tz)}f_k\r\|_{X_j}\ls \|f\|_{\laz X_0,X_1\raz_{\tz}} \sup_{k\in E}|\varepsilon_k|, \quad j\in\{0,1\}.
\end{eqnarray}
Define $F(z):=\sum_{|k|\le M} 2^{k(z-\tz)}f_k$ with convergence in $X_0+X_1$
for all $z\in \overline{U}$. Obviously, $F(\tz)=f$ and $F(z)\in X_0\cap X_1$.

Now we prove $F\in \cf_0(X_0,X_1)$. The analyticity of $F$ is obvious.
To show $F$ is bounded in $X_0+X_1$, for $z\in \overline{U}$,
write $z=a+ib$ with $a\in[0,1]$ and $b\in\rr$, and
\begin{eqnarray*}
F(z)=\sum_{-M\le k<0} 2^{ka+kbi} 2^{-k\tz}f_k +\sum_{0\le k\le M}2^{k(a-1)+kbi}
2^{k(1-\tz)}f_k=:F_0(z)+F_1(z).
\end{eqnarray*}
Since $\{2^{ka+kbi}\}_{-M\le k<0}$ and $\{2^{k(a-1)+kbi}\}_{0\le k\le M}$ are bounded sequences, by \eqref{con1}, we have
$$\|F_j(z)\|_{X_j}\ls \|f\|_{\laz X_0,X_1\raz_\tz}, \quad j\in\{0,1\}.$$
This implies $F(z)\in X_0+X_1$ and $\|F(z)\|_{X_0+X_1}\ls \|f\|_{\laz X_0,X_1\raz_\tz}$ for all $z\in \overline{U}$.

Similarly, since $\{2^{kti}\}_{|k|\le M}$ is a bounded sequence, applying
\eqref{con1} we obtain
\begin{eqnarray*}
\|F(j+it)\|_{X_j}&&=\left\|\sum_{|k|\le M} 2^{kit}2^{k(j-\tz)}f_k\r\|_{X_j} \\
&&\ls\|f\|_{\laz X_0,X_1\raz_\tz} \sup_{|k|\le M}|2^{kti}|\ls
\|f\|_{\laz X_0,X_1\raz_\tz}, \quad j\in\{0,1\}.
\end{eqnarray*}

Now we show $t\mapsto F(j+it)$  is a
continuous function into $X_j$, $j\in\{0,1\}$.
Fix $t_0\in\rr$. Notice that, for any $\varepsilon>0$, we can find
$\delta=\delta(M,\varepsilon)>0$, such that
for any $|t-t_0|<\delta$ and $|k|\le M$,
$|2^{kit}-2^{kit_0}|<\varepsilon$. Hence,
\begin{eqnarray*}
\|F(j+it)-F(j+it_0)\|_{X_j}&&=\left\|\sum_{|k|\le M} [2^{kit}-2^{kit_0}]2^{k(j-\tz)}f_k\r\|_{X_j} \\
&&\ls\|f\|_{\laz X_0,X_1\raz_\tz} \sup_{|k|\le M}|2^{kti}-2^{kit_0}|\ls
\varepsilon\|f\|_{\laz X_0,X_1\raz_\tz}, \quad j\in\{0,1\},
\end{eqnarray*}
as desired.

It remains to show the extension of $F$ from $U$ to  $\overline{U}$ is continuous. Since $F$ is analytic in $U$, we only need to prove that, for any $t\in\rr$,
\begin{equation}\label{con2}
\|F(a+it)-F(it)\|_{X_0+X_1}\to 0, \quad a\to 0^+
\end{equation}
and
\begin{equation}\label{con3}
\|F(a+it)-F(1+it)\|_{X_0+X_1}\to 0, \quad a\to 1^-.
\end{equation}
For any $\varepsilon>0$, we can find
$\delta=\delta(M,\varepsilon)>0$, such that
for any $0<a<\delta$ and $|k|\le M$,
$|2^{ka}-1|<\varepsilon$.
Write
\begin{eqnarray*}
F(a+it)-F(it)
&&=\sum_{-M\le k<0}[2^{ka}-1]2^{kit}2^{-k\tz}f_k+\sum_{|k|\le M} [2^{ka}-1]2^{-k}2^{kit}2^{k(1-\tz)}f_k.
\end{eqnarray*}
Since
$$\lf\|\sum_{-M\le k<0}[2^{ka}-1]2^{kit}2^{-k\tz}f_k\r\|_{X_0}\ls \varepsilon \|f\|_{\laz X_0,X_1\raz_\tz}$$
and
$$\lf\|\sum_{0\le k\le M}[2^{ka}-1]2^{-k}
2^{kit}2^{k(1-\tz)}f_k\r\|_{X_1}\ls \varepsilon \|f\|_{\laz X_0,X_1\raz_\tz},$$
we know that
$$\|F(a+it)-F(it)\|_{X_0+X_1}\ls  \varepsilon \|f\|_{\laz X_0,X_1\raz_\tz}.$$
This gives \eqref{con2}. A similar argument gives \eqref{con3}.

Combining the above arguments, we know that $F\in \cf_0(X_0,X_1)$ with
$\|F\|_{\cf(X_0,X_1)}\ls \|f\|_{\laz X_0,X_1\raz_\tz}$.
Therefore, $\|f\|_{[X_0,X_1]_\tz^i}\ls \|f\|_{\laz X_0,X_1\raz_\tz}$.
This finishes the proof.
\end{proof}

Theorem \ref{main} is then a consequence of the above three theorems.
Moreover, as a byproduct, we obtain the coincidence between the inner complex interpolation
and the Gagliardo-Peetre interpolation.

\begin{corollary}
Let $(X_0, X_1)$ a pair of analytically convex
quasi-Banach lattices of functions on $(\Omega,\mu)$.
Then $\laz X_0,X_1\raz_\tz=[X_0,X_1]_\tz^i$ for all $\tz\in(0,1).$
\end{corollary}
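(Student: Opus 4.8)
The plan is to obtain the coincidence by chaining together the three theorems already established above, with the Gagliardo--Peetre space $\laz X_0,X_1\raz_\tz$ serving as the intermediary. The key observation is that the middle theorem above identifies this space with the closure of the intersection in the Calder\'on product, namely $\laz X_0,X_1\raz_\tz=\overline{X_0\cap X_1}^{\|\cdot\|_{X_0^{1-\tz}X_1^\tz}}$ with equivalent quasi-norms. This identification is exactly what lets us translate statements about the Calder\'on product into statements about $\laz X_0,X_1\raz_\tz$, and vice versa. (Throughout I work under the standing hypotheses of the three theorems, so that $\Omega$ is Polish and $\mu$ is $\sigma$-finite Borel.)

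First I would record the two continuous embeddings in opposite directions. The third theorem above supplies $\laz X_0,X_1\raz_\tz\hookrightarrow[X_0,X_1]_\tz^i$ directly. For the reverse inclusion I would invoke the first theorem above, which gives $[X_0,X_1]_\tz^i\hookrightarrow\overline{X_0\cap X_1}^{\|\cdot\|_{X_0^{1-\tz}X_1^\tz}}$; rewriting the target space by means of the identification from the middle theorem then yields $[X_0,X_1]_\tz^i\hookrightarrow\laz X_0,X_1\raz_\tz$. Putting the two bounded inclusions together shows that $[X_0,X_1]_\tz^i$ and $\laz X_0,X_1\raz_\tz$ coincide as sets and carry equivalent quasi-norms, which is precisely the assertion of the corollary.

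The only point demanding a little care is the bookkeeping of quasi-norm equivalences: the symbol $\hookrightarrow$ records a bounded inclusion, and the middle theorem provides $\|\cdot\|_{\laz X_0,X_1\raz_\tz}\sim\|\cdot\|_{X_0^{1-\tz}X_1^\tz}$, so one should verify that the constants from the three arrows compose without hidden circularity. Since each of the three embeddings is proved independently of the others, there is no genuine obstacle here; the corollary is essentially a formal consequence of the preceding results, and no new analytic input is needed beyond correctly tracking the equivalence constants through the chain.
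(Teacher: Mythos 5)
Your proposal is correct and follows exactly the paper's (implicit) argument: the corollary is obtained as a byproduct of chaining the three theorems, using the middle theorem's identification $\laz X_0,X_1\raz_\tz=\overline{X_0\cap X_1}^{\|\cdot\|_{X_0^{1-\tz}X_1^\tz}}$ to convert the first theorem's embedding into $[X_0,X_1]_\tz^i\hookrightarrow\laz X_0,X_1\raz_\tz$, and the third theorem for the reverse embedding. Your remark about working under the standing hypotheses (Polish $\Omega$, $\sigma$-finite Borel $\mu$) is also the right reading of the corollary's abbreviated statement.
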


\begin{remark}
Recall that if $X_0$ and $X_1$ are Banach spaces, then
$\laz X_0,X_1\raz_\tz\hookrightarrow [X_0,X_1]_\tz$; see, for example, \cite{p71,j81,n85}.
The above corollary gives a generalization of this coincidence.
\end{remark}

From the relation between the inner and outer complex interpolations (see \cite{ca64} and
\cite[Theorem 7.9]{kmm}), we also have the following conclusion.

\begin{corollary}
Let $\tz\in(0,1)$ and $(X_0, X_1)$ a pair of analytically convex
quasi-Banach lattices of functions on $(\Omega,\mu)$.
Then $\laz X_0,X_1\raz_\tz=[X_0,X_1]_\tz$ if either $X_0$, $X_1$ are both
Banach spaces or $X_0,X_1$ are both separable.
\end{corollary}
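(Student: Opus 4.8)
The plan is to deduce this directly from the preceding corollary together with the classical coincidence results for the inner and outer complex methods recalled in the introduction. By the preceding corollary we already know $\laz X_0,X_1\raz_\tz=[X_0,X_1]_\tz^i$, with equivalence of quasi-norms; hence the whole statement reduces to showing that, under either hypothesis, the inner and outer complex interpolation spaces coincide, namely $[X_0,X_1]_\tz^i=[X_0,X_1]_\tz$.

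One direction of this coincidence is automatic and requires no hypothesis beyond analytic convexity of $X_0+X_1$: since $\cf_0\subset\cf$, the embedding $[X_0,X_1]_\tz^i\hookrightarrow[X_0,X_1]_\tz$ holds, as already noted in the introduction. It therefore remains only to establish the reverse embedding $[X_0,X_1]_\tz\hookrightarrow[X_0,X_1]_\tz^i$ in each of the two cases.

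For this reverse embedding I would simply invoke the known results. When $X_0$ and $X_1$ are both Banach spaces, the inner and outer complex methods are classically known to produce the same space (Calder\'on \cite{ca64}; see also \cite[Theorem 7.9]{kmm}). When $X_0$ and $X_1$ are instead both separable analytically convex quasi-Banach spaces, the same coincidence was established by Kalton, Mayboroda and Mitrea \cite{kmm}. In either situation one obtains $[X_0,X_1]_\tz=[X_0,X_1]_\tz^i$, and combining with the preceding corollary yields $\laz X_0,X_1\raz_\tz=[X_0,X_1]_\tz^i=[X_0,X_1]_\tz$, as claimed.

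There is no substantial obstacle in the argument itself, since all the genuine work has already been carried out in the three preceding theorems and the penultimate corollary. The only point demanding care is the bookkeeping of hypotheses: one must verify that the analytic convexity assumed on $X_0$ and $X_1$ (and, in the second case, their separability) are exactly the conditions under which the cited coincidence theorems apply, so that the references can be used verbatim. Granting that, the corollary follows immediately.
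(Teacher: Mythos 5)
Your proposal is correct and takes essentially the same route as the paper: the paper likewise derives this corollary by combining the preceding corollary $\langle X_0,X_1\rangle_\tz=[X_0,X_1]_\tz^i$ with the known coincidence of the inner and outer complex methods in the Banach case (Calder\'on \cite{ca64}) and in the separable quasi-Banach case (\cite[Theorem 7.9]{kmm}). Your write-up merely makes explicit the trivial embedding $[X_0,X_1]_\tz^i\hookrightarrow[X_0,X_1]_\tz$ and the hypothesis bookkeeping, which the paper leaves implicit.
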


Finally we give an application of Theorem \ref{main} to
the Morrey space, which is a  typical example of non-separable spaces.
Let  $0<p\le u\le\infty$ and $(\cx,\mu)$
be a quasi-metric measure space. Recall that
the Morrey space $\mathcal{M}^u_{p}(\cx)$
is  the collection of all $p$-locally integrable
functions $f$ on $\cx$ such that
\begin{equation*}
\|f\|_{\mathcal{M}^u_{p}(\cx)} :=  \sup_{B\subset \cx}
|B|^{1/u-1/p}\lf[\int_B |f(x)|^p\,dx\r]^{1/p}<\infty\, ,
\end{equation*}
where the supremum is taken over all balls $B$ in $\cx$.
Obviously, $\mathcal{M}^p_{p}(\cx)=L_p(\cx)$.
Since Morrey spaces are non-separable, we can not apply \cite[Theorem 3.4]{km98} to Morrey spaces.

By \cite[Proposition 2.1]{lyy}, we know that $[\cm_{p_0}^{u_0}(\cx)]^{1-\tz}[\cm_{p_1}^{u_1}(\cx)]^\tz=\cm_{p}^{u}(\cx)$, which together with Theorem \ref{main} induce the following conclusion.

\begin{proposition}
Let $\tz\in(0,1)$, $0 < p_i \le u_i <\infty$, $i\in\{0,1\}$ and
$$\frac 1u := \frac{1-\tz}{u_0} + \frac{\tz}{u_1} \, ,
\quad \frac 1p := \frac{1-\tz}{p_0} + \frac{\tz}{p_1}.$$
If $u_0p_1=u_1p_0$, then
$$[\cm_{p_0}^{u_0}(\cx),\cm_{p_1}^{u_1}(\cx)]_\theta^i=\laz \cm_{p_0}^{u_0}(\cx),\cm_{p_1}^{u_1}(\cx)\raz_\tz =\overline{\cm_{p_0}^{u_0}(\cx)\cap
\cm_{p_1}^{u_1}(\cx)}^{\|\cdot\|_{\cm_{p}^{u}(\cx)}}.$$
\end{proposition}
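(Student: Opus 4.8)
The plan is to check that the pair $(\cm_{p_0}^{u_0}(\cx),\cm_{p_1}^{u_1}(\cx))$ satisfies the hypotheses of Theorem \ref{main}, and then to read off the conclusion by combining that theorem with the Calder\'on product identity of \cite[Proposition 2.1]{lyy} and the coincidence $\laz X_0,X_1\raz_\tz=[X_0,X_1]_\tz^i$ from the corollary above. Since essentially all the analytic work has already been carried out, the proof reduces to verifying three structural facts about Morrey spaces and then substituting.

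First I would observe that each $\cm_{p_i}^{u_i}(\cx)$ is a quasi-Banach lattice on $(\cx,\mu)$: completeness is standard, while the lattice property is immediate from the definition, since $|g|\le|f|$ $\mu$-a.e. gives $\int_B|g|^{p_i}\le\int_B|f|^{p_i}$ for every ball $B$ and hence $\|g\|_{\cm_{p_i}^{u_i}(\cx)}\le\|f\|_{\cm_{p_i}^{u_i}(\cx)}$. Here I also record that $\cx$ is to be taken Polish and $\mu$ a $\sigma$-finite Borel measure, as demanded by Theorem \ref{main}. Next I would verify analytic convexity through Proposition \ref{r-convex}. The key is the scaling identity for the $s$-convexification,
$$\bigl\| |f|^s \bigr\|_{\cm_{p}^{u}(\cx)}^{1/s}=\left[\sup_{B}|B|^{1/u-1/p}\left(\int_B|f|^{sp}\right)^{1/p}\right]^{1/s}=\sup_{B}|B|^{1/(su)-1/(sp)}\left(\int_B|f|^{sp}\right)^{1/(sp)},$$
which shows $(\cm_{p}^{u}(\cx))^{(s)}=\cm_{sp}^{su}(\cx)$ with equal quasi-norms. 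Taking $s=1/r$ with $0<r\le\min\{p_0,p_1,1\}$ makes $sp_i\ge1$ (and preserves $p_i\le u_i$), so $(\cm_{p_i}^{u_i}(\cx))^{(1/r)}=\cm_{p_i/r}^{u_i/r}(\cx)$ is an honest (Banach) Morrey space; whence $\cm_{p_i}^{u_i}(\cx)$ is $r$-convex and, by Proposition \ref{r-convex}, analytically convex.

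With both hypotheses in hand, Theorem \ref{main} gives $[\cm_{p_0}^{u_0}(\cx),\cm_{p_1}^{u_1}(\cx)]_\tz^i=\overline{\cm_{p_0}^{u_0}(\cx)\cap\cm_{p_1}^{u_1}(\cx)}^{\|\cdot\|_{[\cm_{p_0}^{u_0}(\cx)]^{1-\tz}[\cm_{p_1}^{u_1}(\cx)]^\tz}}$, and the corollary above identifies this space with $\laz\cm_{p_0}^{u_0}(\cx),\cm_{p_1}^{u_1}(\cx)\raz_\tz$. It then remains only to rewrite the Calder\'on product appearing inside the closure, and this is exactly where the assumption $u_0p_1=u_1p_0$ enters: it is equivalent to $p_0/u_0=p_1/u_1$, which together with the definitions of $1/u$ and $1/p$ forces $p/u=p_0/u_0=p_1/u_1$ and is precisely the condition under which \cite[Proposition 2.1]{lyy} yields $[\cm_{p_0}^{u_0}(\cx)]^{1-\tz}[\cm_{p_1}^{u_1}(\cx)]^\tz=\cm_{p}^{u}(\cx)$. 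Substituting this equality completes the argument. I expect no deep obstacle here: the one point genuinely needing care is confirming that $\cx$ and $\mu$ meet the Polish and $\sigma$-finite Borel standing assumptions of Theorem \ref{main}, since the convexity computation and the invocation of \cite[Proposition 2.1]{lyy} are routine once the scaling identity above is recorded.
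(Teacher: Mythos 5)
Your proposal is correct and takes essentially the same route as the paper: the paper also obtains the proposition by combining Theorem \ref{main} (together with the corollary identifying $\laz X_0,X_1\raz_\tz$ with $[X_0,X_1]_\tz^i$) with the Calder\'on product identity $[\cm_{p_0}^{u_0}(\cx)]^{1-\tz}[\cm_{p_1}^{u_1}(\cx)]^\tz=\cm_{p}^{u}(\cx)$ from \cite[Proposition 2.1]{lyy}, which is exactly where the hypothesis $u_0p_1=u_1p_0$ is used. Your extra verification that Morrey spaces are analytically convex quasi-Banach lattices, via the convexification identity $(\cm_{p}^{u}(\cx))^{(1/r)}=\cm_{p/r}^{u/r}(\cx)$ and Proposition \ref{r-convex}, merely fills in details the paper leaves implicit, and it is accurate.
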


\bigskip

\noindent  Wen Yuan

\medskip

\noindent  School of Mathematical Sciences, Beijing Normal University,
Laboratory of Mathematics and Complex Systems, Ministry of
Education, Beijing 100875, People's Republic of China

\noindent {\it E-mail}: \texttt{wenyuan@bnu.edu.cn}
\end{document}